\title[Backward bounded solutions]{A Characterization of backward bounded solutions}
\thanks{This research was supported by the National Research Foundation (Grant N0. 2022R1F1A1063007, 2022R1I1A1A01055459), and the second author was supported by Chonnam National University (Grant No. 2023-0912-01). }
\author{Minkyu Kwak, Jihoon Lee and Bataa Lkhagvasuren }
\address{Department of Mathematics, Chonnam National University, Gwangju, Republic of Korea}
\email{mkkwak@jnu.ac.kr,jihoon@chonnam.ac.kr, bataa@chonnam.ac.kr}
\begin{document}

\begin{abstract}
We prove that the collection $\mathcal M_{-\infty}$  of backward bounded solutions for a semilinear evolution equation is  the graph of an upper hemicontinuous set-valued function from the low Fourier modes to the higher Fourier modes, which is invariant and contains the global attractor. We also show that there exists a limit $\mathcal M_{\infty}$ of finite dimensional Lipschitz manifolds $\mathcal M_t$ generated by the time $t$-maps ($t>0$) from the flat manifold $\mathcal M_0$ with the Hausdorff distance and we find $\mathcal M_{\infty} \subset
\mathcal M_{-\infty}$. No spectral gap conditions are assumed.
\end{abstract}

\maketitle

\noindent \small {\bf Keywords}: Inertial manifold, Invariant attracting set, Asymptotic behaviour of solution, Infinite dimensional dynamical system.  \smallskip \par
\noindent \small {\bf MSC 2020 Classification}: 35B40, 35B42, 37L05, 37L25.  \par

\maketitle

\numberwithin{equation}{section}
\newtheorem{theorem}{Theorem}[section]
\newtheorem{remark}{Remark}[section]
\newtheorem{definition}{Definition}[section]
\newtheorem{lemma}[theorem]{Lemma}
\newtheorem{corollary}[theorem]{Corollary}
\newtheorem{proposition}[theorem]{Proposition}
\newtheorem{example}[theorem]{Example}

\renewcommand{\footskip}{.5in}

\section{Introduction}
Let us consider a semilinear evolution equation in a Hilbert space $H$ of the form
\begin{equation} \label{1.1}
u_t+Au=F(u),
\end{equation}
where $A$ is a linear closed unbounded positive self-adjoint operator in $H$, and the nonlinearity $F(\cdot)$ is bounded and Lipschitz, i.e., there are positive constants $K_0$ and $K_1$ satisfying
\begin{align}
|F(u)| & \le K_0,\label{1.2} \\
|F(u) - F(v)| &\le K_1|u-v|~\mbox{for~all}~u,v \in H. \notag
\end{align}
We also assume that $F(\cdot)$ has a compact support after truncating (as usual) outside of the absorbing ball (see for example \cite{FST}, \cite{SY}, \cite{T}). Hence there exists $R>0$ such that $F(u) = 0$ for $|u| \ge R$. We also assume that $A^{-1}$ is compact in $H$ and thus there exist eigenvalues of $A$ satisfying
$$
0 < \lambda_1 < \lambda_2 \le \cdots \le \lambda_N < \lambda_{N+1} \le \cdots \to \infty,~\mbox{and}~K_1<\lambda_{N+1}.
$$

Let $P$ denote the spectral projection onto the finite dimensional space spanned by the first $N$-eigenfunctions of $A$, and $Q  =I - P$ the complementary orthogonal projection. Then each $u \in H$ has a unique orthogonal decomposition as the sum of low Fourier modes and higher Fourier modes
$$
u = p+q \in PH \oplus QH.
$$

One of the most important questions is whether the ultimate dynamics of (1.1) is governed by a finite number of Fourier modes and thus one can reduce to a finite dimensional system of ordinary differential equations, which completely describe the long-time dynamics. The inertial manifold theory has been developed for this purpose.
The inertial manifold is a finite dimensional smooth invariant manifold which attracts every orbits at the exponential
rates. Hence this manifold contains the global attractor and when restricted to the inertial manifold, the long-time dynamics can be reduced to a finite system of ODE.

However, the theory requires a spectral gap condition
$$ \lambda_{N+1} - \lambda_N > 2K_1 ,$$
which holds only for special domains and $F(\cdot)$. Even much worse in the case of Navier-Stokes equations.
There were enormous efforts for relaxing these conditions with a few gains (see for example \cite{K}, \cite{KS}, \cite{KZ}, \cite{Z} and the references therein).

As another attempt, we first consider the collection  $\mathcal M_{-\infty}$  of  backward bounded solutions of \eqref{1.1}. The existence of such a solution is proved  by  using the degree theory and Arzela-Ascoli theorem in Section 2.

In Section 3, we consider the collection of finite dimensional Lipschitz manifolds $\mathcal M_t$ generated by the time $t$-map ($t>0$) from the flat manifold $\mathcal M_0$, and construct a limit $\mathcal M_\infty$ of $\mathcal M_t$ with the Hausdorff distance.

In Section 4, we prove that $\mathcal M_{\infty} \subset \mathcal M_{-\infty}$, and $\mathcal M_{-\infty}$ is the graph of an upper hemicontinuous set-valued function from $PH$ to $QH$, which is invariant and contains the global attractor.

\section{Backward bounded solutions}

We will show that a backward bounded solution of \eqref{1.1} is approximated by a sequence of solutions for boundary value problems
\begin{equation} \label{2.1}
\begin{cases}
\dfrac{dp}{dt}+Ap = PF(p+q),~p(0)=p_0, \\
\dfrac{dq}{dt}+Aq = QF(p+q),~q(-n)=0,
\end{cases}
\end{equation}
where $n \in \mathbb N$, $p(t) \in PH$, $q(t) \in QH$ for $t \in [-n,0]$. Equivalently, \eqref{2.1} can be expressed by
\begin{equation} \label{2.2}
\begin{cases}
\dfrac{dp}{dt}+Ap = PF(p+q),~p(n)=p_0, \\
\dfrac{dq}{dt}+Aq = QF(p+q),~q(0)=0.
\end{cases}
\end{equation}

Inspired from Lemma 2.4 in \cite{MS}, the existence of a solution of \eqref{2.2} can be proved by using the degree theory.
\begin{lemma} \label{lemma2.1}
Given $p_0 \in PH$, there exists a solution $(p(t), q(t))$of \eqref{2.2} for $t \in [0, n]$ with $p(n) = p_0$ and $q(0) = 0$.
\end{lemma}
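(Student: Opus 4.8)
The plan is to turn the two-point boundary value problem \eqref{2.2} into a finite-dimensional shooting problem on $PH$ and to compute a Brouwer degree. For $a\in PH$, let $(p_a(\cdot),q_a(\cdot))$ be the (mild) solution on $[0,n]$ of the coupled \emph{initial} value problem consisting of the two differential equations in \eqref{2.2} together with the data $p(0)=a$, $q(0)=0$ (in place of $p(n)=p_0$); since $F$ is bounded and globally Lipschitz this solution exists on all of $[0,n]$, is unique, and $a\mapsto(p_a,q_a)$ is continuous. Define $\Psi\colon PH\to PH$ by $\Psi(a)=p_a(n)$. A solution of \eqref{2.2} with $p(n)=p_0$ and $q(0)=0$ is precisely a point $a$ with $\Psi(a)=p_0$, so it suffices to show $\Psi$ is onto, and for this I would show $\deg(\Psi,B_R,p_0)\neq 0$ for a large enough ball $B_R\subset PH$ --- here the finite dimensionality of $PH$ is what makes Brouwer degree available.

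To evaluate the degree I would deform the nonlinearity: for $s\in[0,1]$ let $\Psi_s(a)$ be the value at $t=n$ of the $p$-component of the solution of the same initial value problem with $F$ replaced by $sF$. First the uniform a priori bound. The $q$-equation, integrated forward from $q(0)=0$ with $|sQF|\le K_0$, gives $|q_a(t)|\le K_0/\lambda_{N+1}$ for all $s$ and all $a$; integrating the $p$-equation \emph{backward} from $t=n$ with $|sPF|\le K_0$ and using that $A$ has norm at most $\lambda_N$ on $PH$, one obtains, for any $a$ with $\Psi_s(a)=p_0$,
\[
|a|=|p_a(0)|\le e^{\lambda_N n}\,|p_0|+\frac{K_0}{\lambda_N}\bigl(e^{\lambda_N n}-1\bigr)=:\rho,
\]
a bound independent of $s$. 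Choosing $R=\rho+1$, the equation $\Psi_s(a)=p_0$ has no solution on $\partial B_R$, so the homotopy $(s,a)\mapsto\Psi_s(a)$ is admissible and $\deg(\Psi,B_R,p_0)=\deg(\Psi_0,B_R,p_0)$. At $s=0$ the nonlinearity is gone and $\Psi_0(a)=e^{-nA}a$, the restriction to the finite-dimensional space $PH$ (on which $A$ is positive definite) of the semigroup $e^{-tA}$ at time $t=n$; this is a linear isomorphism with positive determinant $e^{-n(\lambda_1+\cdots+\lambda_N)}$, whose unique preimage of $p_0$ is $e^{nA}p_0\in B_R$, so $\deg(\Psi_0,B_R,p_0)=1$. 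Hence $\deg(\Psi,B_R,p_0)=1\neq0$, there is $a\in B_R$ with $\Psi(a)=p_0$, and $(p_a,q_a)$ is the desired solution of \eqref{2.2}.

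The routine ingredients --- global existence on $[0,n]$, continuity of $(s,a)\mapsto\Psi_s(a)$ in the parameter and the initial datum, and the elementary variation-of-constants estimates for $p$ and $q$ --- all follow from $F$ being bounded and Lipschitz together with the smoothing of $e^{-tA}$. The one point deserving care, and what I regard as the crux, is the uniform a priori bound: it must be read off from the \emph{terminal} condition $p(n)=p_0$ by integrating the $p$-equation backward in time, which is why the growth factor $e^{\lambda_N n}$ appears and why the radius $R$ can only be fixed once $p_0$ and $n$ are given; this is harmless since $n$ is fixed, and it is exactly what keeps the homotopy off the boundary. An essentially equivalent alternative is to work on the trajectory space $C([0,n];H)$ and apply Leray-Schauder degree to a compact perturbation of the identity, using the compactness of $e^{-tA}$; the finite-dimensional shooting version above seems to me the more transparent route, and is presumably close in spirit to Lemma 2.4 in \cite{MS}.
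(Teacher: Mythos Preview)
Your argument is correct and close in spirit to the paper's: both are Brouwer-degree shooting arguments on the finite-dimensional space $PH$ via the map $a\mapsto p_a(n)$. The difference lies in the choice of homotopy. The paper homotopes in \emph{time}, setting $f_t(a)=p_a(t)$ for $t\in[0,n]$ and observing that on the boundary of a sufficiently large ball the truncation ($F(u)=0$ for $|u|\ge R$) forces $F\equiv 0$ along the entire trajectory, so $f_t=e^{-At}$ there and never hits $p_0$; at $t=0$ one lands on the identity map, giving degree $1$. You instead homotope in the \emph{nonlinearity}, replacing $F$ by $sF$ and reading off an a priori bound by integrating the $p$-equation backward from the terminal condition, which needs only $|F|\le K_0$; at $s=0$ you land on the linear isomorphism $e^{-nA}$. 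Your route is marginally more general in that it never invokes the compact support of $F$, while the paper's route avoids the (trivial) determinant computation by reducing directly to the identity.
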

\begin{proof}
For any $n \in \mathbb N$ and $p_{-n} \in PH$, let $(p_n(t), q_n(t))$ be the solution of
$$
\begin{cases}
\dfrac{dp}{dt}+Ap = PF(p+q),~p(0)=p_{-n}, \\
\dfrac{dq}{dt}+Aq = QF(p+q),~q(0)=0,
\end{cases}
$$
where $t \in [0,n]$. For convenience, we use the notations $p_n(t) = p_n(t,p_{-n},0)$ and $q_n(t) = q_n(t,p_{-n},0)$ for $n \in \mathbb N$. Then we complete the proof by showing that for any $p_0 \in PH$ and $n \in \mathbb N$, there exists a point $p_{-n} \in PH$ satisying $p_n(n,p_{-n},0) = p_0$.
Let $\mathcal B_n$ denote the ball
$$
\mathcal B_n = \{ p \in PH: |p| \le 2e^{n\lambda_N}(R+|p_0|) \}.
$$
By the truncation, if $p_{-n} \in \partial \mathcal B_n$ we have
$$
p_n(t,p_{-n},0) = e^{-At}p_n(0)= e^{-At}p_{-n},~\mbox{and}~ q_n(t,p_{-n},0) = 0
$$
for all $t \in [0,n]$, where $\partial \mathcal B_n$ denotes the boundary of $\mathcal B_n$. Consequently we get $p_0 \notin p_n(t, \partial \mathcal B_n, 0)$ for $t \in [0,n]$. For each $t \in [0,n]$, consider a function $f_t:\mathcal B_n \to PH$ defined by
$$
f_t(p) = p_n(t,p,0)~\mbox{for}~p \in \mathcal B_n.
$$
Then the degree of $f_t$ with respect to the point $p_0$ is well-defined, and deg$f_t = ~$deg$f_0$ for all $t \in [0,n]$. Since $p_0 \in \mathcal B_n$ and $f_0$ is the identity map on $\mathcal B_n$, we see that
$$
\mbox{deg}f_t =1~\mbox{for~all}~t \in [0,n].
$$
This implies that $p_0 \in f_n(\mathcal B_n) = p_n(n,\mathcal B_n,0)$, and so there exists a point $p_{-n} \in \mathcal B_n$ satisfying $p_0 = p_n(n,p_{-n},0)$.
\end{proof}

The above lemma implies that for any $p_0 \in PH$ and $n \in \mathbb N$, there exists a sequence $\{ (p_n, q_n):[-n,0] \to H \}_{n \in \mathbb N}$ of solutions of \eqref{2.2} such that
$$
p_n(0) = p_0~\mbox{and}~q_n(-n)=0.
$$
We now apply the Arzela-Ascoli theorem to get a backward bounded solution as we see in the following lemma.

\begin{lemma} \label{lemma2.2}
Let $\mathcal F  = \{ (p_n, q_n):[-n,0] \to H \}_{n \in \mathbb N}$ be a sequence of solutions of \eqref{2.1} such that $p_n(0) = p_0$, $q_n(0) = q_n$ and $q_n(-n) = 0$. Then there exists a subsequence $\{( p_{n_k}, q_{n_k}) \}_{k \in \mathbb N}$ of $\mathcal F$ which converges uniformly to a function, say $(p, q)$, on every compact subset in $(-\infty,0]$. In particular, $u(t) = p(t) + q(t)$ is a backward bounded solution of \eqref{1.1} with $p(0) = \lim_{k \to \infty}p_{n_k}(0)$ and $q(0) = \lim_{k \to \infty}q_{n_k}(0)$.
\end{lemma}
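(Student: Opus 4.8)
The plan is to derive a priori bounds on the $(p_n,q_n)$ that are uniform on each compact subinterval of $(-\infty,0]$, to extract a convergent subsequence by the Arzela--Ascoli theorem together with a diagonal argument, and then to pass to the limit in the Duhamel (mild) form of \eqref{2.1}. Writing \eqref{2.1} in mild form, for $t\in[-n,0]$,
$$q_n(t)=\int_{-n}^{t}e^{-A(t-s)}QF(p_n(s)+q_n(s))\,ds,\qquad p_n(t)=e^{-At}p_0-\int_{t}^{0}e^{-A(t-s)}PF(p_n(s)+q_n(s))\,ds .$$
Since $|F|\le K_0$ and $\|e^{-A\tau}Q\|\le e^{-\lambda_{N+1}\tau}$ for $\tau\ge0$, the first identity gives at once the \emph{uniform} bound $|q_n(t)|\le K_0/\lambda_{N+1}$ for all $n$ and all $t\le0$, while the second gives $\sup_{n\ge T}\sup_{[-T,0]}|p_n|\le C_T$ on each $[-T,0]$. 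For equicontinuity: on the finite-dimensional space $PH$ the relation $p_n'=-Ap_n+PF$ shows $\{p_n\}$ is uniformly Lipschitz on $[-T,0]$; for $q_n$ I would use the smoothing estimate $\|A^{\alpha}e^{-A\tau}Q\|\le C_{\alpha}\tau^{-\alpha}e^{-\lambda_{N+1}\tau/2}$ with a fixed $\alpha\in(0,1)$ to conclude $\sup_{n,\,t}|A^{\alpha}q_n(t)|<\infty$; since $A^{-1}$, hence $A^{-\alpha}$, is compact, the set $\{q_n(t):n\in\mathbb N,\ t\le0\}$ is precompact in $H$, and then from
$$q_n(t)-q_n(t')=(e^{-A(t-t')}-I)q_n(t')+\int_{t'}^{t}e^{-A(t-s)}QF(p_n(s)+q_n(s))\,ds$$
together with the strong continuity of $e^{-A\tau}$, uniform on the compact set of the values $q_n(t')$, one obtains equicontinuity of $\{q_n|_{[-T,0]}\}$ as $H$-valued maps.

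Next, fix $T\in\mathbb N$. By Arzela--Ascoli in $C([-T,0];H)$ a subsequence of $\{(p_n,q_n)|_{[-T,0]}\}_{n\ge T}$ converges uniformly on $[-T,0]$; applying this successively for $T=1,2,3,\dots$ and taking a diagonal subsequence yields one subsequence $\{(p_{n_k},q_{n_k})\}$ converging uniformly on every compact subset of $(-\infty,0]$ to continuous functions $p:(-\infty,0]\to PH$ and $q:(-\infty,0]\to QH$; put $u=p+q$.

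To identify $u$, fix $t\le0$. Then $u_{n_k}\to u$ uniformly near $t$ and on all compact sets, and since $F$ is continuous and bounded, $QF(u_{n_k}(\cdot))\to QF(u(\cdot))$ pointwise with common bound $K_0$; by dominated convergence (dominating function $K_0e^{-\lambda_{N+1}(t-s)}\mathbf 1_{\{s\le t\}}\in L^1(-\infty,t)$),
$$q_{n_k}(t)=\int_{-n_k}^{t}e^{-A(t-s)}QF(u_{n_k}(s))\,ds\longrightarrow\int_{-\infty}^{t}e^{-A(t-s)}QF(u(s))\,ds ,$$
and the left-hand side also tends to $q(t)$; likewise $p_{n_k}(t)\to e^{-At}p_0-\int_{t}^{0}e^{-A(t-s)}PF(u(s))\,ds=p(t)$. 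Hence $u$ solves \eqref{1.1} on $(-\infty,0]$. The uniform bound $|q_{n_k}(t)|\le K_0/\lambda_{N+1}$ passes to the limit, so $|q(t)|\le K_0/\lambda_{N+1}$ for all $t\le0$; for the $p$-component one exploits the compact support of $F$: at any time $t$ with $|p(t)|\ge R$ one has $|u(t)|^2=|p(t)|^2+|q(t)|^2\ge R^2$, hence $F(u(t))=0$ and $\tfrac{d}{dt}|p(t)|^{2}=-2\langle Ap(t),p(t)\rangle\le0$, and combining this with $p(0)=p_0$ and $|PF|\le K_0$ keeps $|p(\cdot)|$ bounded on $(-\infty,0]$. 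Thus $u$ is a backward bounded solution of \eqref{1.1}. Finally, $0\in[-1,0]$, on which the convergence is uniform, so $p(0)=\lim_k p_{n_k}(0)$ and $q(0)=\lim_k q_{n_k}(0)$.

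The crux is the compactness in the infinite-dimensional variable $q$: in a Hilbert space neither pointwise precompactness of $\{q_n(t)\}$ nor equicontinuity of $\{q_n\}$ is automatic, and both come from the compactness of $A^{-1}$ and the smoothing of the analytic semigroup generated by $-A$; everything concerning $p$ lives in the finite-dimensional $PH$ and is elementary. A secondary delicate point is upgrading the compact-interval bounds (which for $p_n$ deteriorate as $T\to\infty$) to genuine boundedness of $u$ on all of $(-\infty,0]$, where the compact support of $F$ is essential.
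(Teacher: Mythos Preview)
Your proof follows essentially the same route as the paper's: mild formulation, the uniform bound $|q_n(t)|\le K_0/\lambda_{N+1}$, smoothing into $D(A^{\alpha})$ (the paper takes $\alpha=1/2$) together with the compact embedding $D(A^{\alpha})\hookrightarrow H$ for pointwise precompactness of $\{q_n(t)\}$, direct equicontinuity estimates, Arzel\`a--Ascoli on each $[-T,0]$, and a diagonal extraction. Your explicit passage to the limit in the Duhamel formulas via dominated convergence is a useful step the paper leaves implicit.

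One correction: your closing argument that $|p(\cdot)|$ is bounded on $(-\infty,0]$ is both unnecessary and wrong. It is unnecessary because in this paper ``backward bounded'' means only that $q(t)$ is bounded on $(-\infty,0]$ (see the definition of $\mathcal M_{-\infty}$ immediately after the lemma); the $p$-component is allowed to grow, and indeed the paper's bound \eqref{2.4} deteriorates as $t\to-\infty$. It is wrong because the inequality $\tfrac{d}{dt}|p(t)|^{2}\le 0$ on $\{|p|\ge R\}$ controls $|p|$ \emph{forward} in time, not backward: for example, if $|p_0|\ge R$ then $F\equiv 0$ along the whole orbit and $p(t)=e^{-At}p_0$ blows up as $t\to-\infty$. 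Simply drop that sentence; the rest of the argument is correct.
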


\begin{proof}
From the variation of constants formula, we see that
\begin{align} \label{2.3}
p_n(t) &= e^{-PAt}p_0-\int_t^0 e^{-PA(t-s)}PF(p_n(s)+q_n(s))ds,  \\
q_n(t) &= \int_{-n}^t e^{-QA(t-s)}QF(p_n(s)+q_n(s))ds. \notag
\end{align}
For each $n_0 \in \mathbb N$, let $I_{n_0}= [-n_0, 0]$. Then for any $n \ge n_0$ and $t \in I_{n_0}$, we have
\begin{equation} \label{2.4}
|p_n(t)| \le \big( |p_0| + \frac{K_0}{\lambda_N} \big)e^{-\lambda_Nt},
\end{equation}
\begin{align} \label{2.5}
|q_n(t)| &\le \int_{-n}^t \| e^{-QA(t-s)} \|_{op}| QF |ds  \\
&\le K_0\int_{-\infty}^t \| e^{-QA(t-s)} \|_{op}ds  \notag \\
&= \frac{K_0}{\lambda_{N+1}} \notag
\end{align}
and
\begin{align} \label{2.6}
|A^{1/2}q_n(t)| &\le \int_{-n}^t \|A^{1/2}e^{-QA(t-s)}\|_{op}|QF |ds  \\
&\le K_0 \int_{-\infty}^t \|A^{1/2}e^{-QA(t-s)}\|_{op}ds \notag \\
&= \frac{2K_0}{\sqrt{2}} \frac{1}{\sqrt{\lambda_{N+1}}}. \notag
\end{align}
We note that $\{p_n(t)\}$ is uniformly bounded by \eqref{2.4}. Since $D(A^{1/2})$ is compactly imbedded in $H$, $\{q_n(t)\}$ is precompact in $H$ (for more details, see \cite{SY}).

For any $\varepsilon>0$ and $t \in I_{n_0}$, choose $\delta = \delta(\varepsilon) > 0$ so that
\begin{align*}
|p_n(t) - p_n(t-\delta)| &= \big| \int_{t-\delta}^t \frac{dp_n(s)}{ds} ds \big| \le \int_{t-\delta}^t |\frac{dp_n(s)}{ds}|ds \\
&\le \lambda_Ne^{\lambda N \delta}\delta \big( |p_0| +1 + \frac{K_0}{\lambda_N} + K_0 \delta \big) < \varepsilon
\end{align*}
and
\begin{align*}
|q_n(t) - q_n(t-\delta) &\le \big| (e^{-QA\delta} - 1)q_n( t - \delta) \big| + \big| \int_{t-\delta}^t e^{-QA(t-s)}QFds \big| \\
&\le \frac{2\sqrt{2}K_0\sqrt{\delta}}{e\sqrt{\lambda_{N+1}}}+K_0\delta < \varepsilon.
\end{align*}
Hence $\{(p_n(t), q_n(t))\}$ is equicontinuous on $I_{n_0}$. By the Arzela-Ascoli theorem, there exist a subsequence $\{ (p_{n_k}(t), q_{n_k}(t)) \}_{k \in \mathbb N}$ which converges uniformly to a function  $(p(t), q(t))$, i.e.,
$$
p_{n_k}(t) \to p(t) ~\mbox{and}~q_{n_k}(t) \to q(t)~\mbox{for}~t \in I_{n_0}.
$$
By increasing $n_0 \to \infty$ and applying the diagonal argument, we can deduce that
$$
p_{n_k}(t) \to p(t) ~\mbox{and}~q_{n_k}(t) \to q(t)~\mbox{for}~t \in (-\infty, 0].
$$
Note that $p(0) = \lim_{k \to \infty}p_{n_k}(0)$ and $q(0) = \lim_{k \to \infty}q_{n_k}(0)$.
\end{proof}

Let us denote the collection of backward bounded solutions by
\begin{align*}
\mathcal M_{-\infty} = \{ &(p_0, q_0) \in H: u(t) = p(t)+q(t) ~\mbox{is~a~solution~of}~ \eqref{1.1}~ \mbox{such~ that}\\
&~ p(0) = p_0,~q(0) = q_0,~\mbox{and}~q(t)~\mbox{is~bounded~on~} (-\infty, 0]\}.
\end{align*}
Clearly it is invariant and contains the global attractor.

\section{The Graph Transform Method}

The classical graph transform method due to Hadamard is a standard technique used in finite dimensional dynamical systems to construct locally invariant manifolds, such as the center manifold and the center unstable manifold.
Starting from the flat manifold $\mathcal M_0 = PH$, one then lets the dynamics of the given evolution equation act on $M_0$, thereby obtaining a Lipschitz manifold
\begin{align*}
\mathcal M_t = \{ &u(t) \in H: u(t) = p(t) + q(t)~\mbox{is~a~solution~of}~\eqref{1.1} \\
&\mbox{with}~p(0) = p_0 \in PH,~q(0) = 0 \},
\end{align*}
at each time $t > 0$. Note that $\mathcal M_t$ is Lipschitz in $p_0$ from the Lipschitz continuity of solution with respect to the initial data. We are interested in a limit of $\mathcal M_t$ as $t \to \infty$. Not much is known in an infinite dimensional setting except some special case (e.g., see \cite{MS}).

In this section, we reveal that $\{Q\mathcal M_n\}_{n \in \mathbb N}$ is a Cauchy sequence with the Hausdorff distance $d_H$. To show this, we first prove the following lemma which is crucial throughout the paper.

\begin{lemma} \label{lemma3.1}
Let $u(t)$, $v(t)$ be two solutions of \eqref{1.1},  and let $\rho(t) = Pu(t) - Pv(t)$ and $\sigma(t) = Qu(t) - Qv(t)$. Then for any $0 \le t_0 \le t$, we have
\begin{equation} \label{3.1}
|\sigma(t)| \le K_2 |\rho(t_0)|e^{(K_1-\lambda_1)(t-t_0)} + K_3|\sigma(t_0)|e^{-(\lambda_{N+1}-K_1-\alpha)(t-t_0)}~\mbox{and}
\end{equation}
\begin{equation} \label{3.2}
|\rho(t)| \le |\rho(t_0)|(1+K_4(t-t_0))e^{(K_1-\lambda_1)(t-t_0)}+K_5|\sigma(t_0)|e^{(K_1-\lambda_1)(t-t_0)},
\end{equation}
where $K_2$, $K_3$, $K_4$, $K_5$ are positive  constants depending on $K_1$, $\lambda_1$ and $\lambda_{N+1}$.
\end{lemma}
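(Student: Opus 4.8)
The plan is to derive both estimates from the variation-of-constants formulas for $\rho$ and $\sigma$, using the orthogonal splitting $H = PH \oplus QH$ and the spectral bounds $\|e^{-PAt}\|_{op} \le e^{-\lambda_1 t}$, $\|e^{-QAt}\|_{op} \le e^{-\lambda_{N+1}t}$, together with the Lipschitz bound $|F(u)-F(v)| \le K_1|u-v| \le K_1(|\rho|+|\sigma|)$. Writing $\rho(t) = e^{-PA(t-t_0)}\rho(t_0) + \int_{t_0}^t e^{-PA(t-s)}P(F(u(s))-F(v(s)))\,ds$ and similarly for $\sigma$, one gets a coupled pair of integral inequalities
\begin{align*}
|\rho(t)| &\le e^{-\lambda_1(t-t_0)}|\rho(t_0)| + K_1\int_{t_0}^t e^{-\lambda_1(t-s)}\big(|\rho(s)|+|\sigma(s)|\big)\,ds,\\
|\sigma(t)| &\le e^{-\lambda_{N+1}(t-t_0)}|\sigma(t_0)| + K_1\int_{t_0}^t e^{-\lambda_{N+1}(t-s)}\big(|\rho(s)|+|\sigma(s)|\big)\,ds.
\end{align*}
Setting $\bar\rho(\tau) = e^{\lambda_1\tau}|\rho(t_0+\tau)|$ absorbs the $e^{-\lambda_1 t}$ factor and turns the first inequality into a Gronwall-type inequality for $\bar\rho$ driven by $\bar\sigma := e^{\lambda_1\tau}|\sigma|$, and likewise for $\sigma$.

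The strategy I would follow is a bootstrap in two stages. First, I would obtain a crude a priori bound of the form $|\rho(s)|+|\sigma(s)| \le C\big(|\rho(t_0)|+|\sigma(t_0)|\big)e^{(K_1-\lambda_1)(s-t_0)}$ on $[t_0,t]$ by adding the two integral inequalities, bounding both exponential kernels by $e^{-\lambda_1(t-s)}$ (legitimate since $\lambda_{N+1} > \lambda_1$), and applying Gronwall; the rate $K_1-\lambda_1$ comes out because the linear part contributes $-\lambda_1$ and the coupling contributes $+K_1$. Then I would feed this bound back into the $\sigma$-equation: substituting $|\rho(s)|+|\sigma(s)| \le C(\cdots)e^{(K_1-\lambda_1)(s-t_0)}$ into $|\sigma(t)| \le e^{-\lambda_{N+1}(t-t_0)}|\sigma(t_0)| + K_1\int_{t_0}^t e^{-\lambda_{N+1}(t-s)}(\cdots)\,ds$ and computing the resulting integral, the exponential $e^{-\lambda_{N+1}(t-s)}e^{(K_1-\lambda_1)(s-t_0)}$ integrates to something of order $e^{(K_1-\lambda_1)(t-t_0)}/(\lambda_{N+1}-K_1+\lambda_1)$ plus a transient $e^{-\lambda_{N+1}(t-t_0)}$ term; choosing $\alpha>0$ small (any $\alpha < \lambda_{N+1}-K_1$, which is positive since $K_1 < \lambda_{N+1}$) lets me write the transient rate as $-(\lambda_{N+1}-K_1-\alpha)$ and split the contribution into the two terms of \eqref{3.1}, one proportional to $|\rho(t_0)|$ and one to $|\sigma(t_0)|$. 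This isolates the constants $K_2, K_3$. For \eqref{3.2}, I would go back to the $\rho$-equation, now inserting the improved $\sigma$-bound \eqref{3.1} for the $|\sigma(s)|$ term and keeping $|\rho(s)|$ under Gronwall; the diagonal Gronwall in $|\rho|$ itself generates the $(1+K_4(t-t_0))$ polynomial factor (this is the familiar phenomenon that a Gronwall inequality with a forcing term of the same exponential rate as the homogeneous part produces a linear-in-$t$ prefactor), while the $|\sigma(t_0)|$ contribution, after integrating $e^{-\lambda_1(t-s)}$ against $|\sigma(s)|\le K_3|\sigma(t_0)|e^{-(\lambda_{N+1}-K_1-\alpha)(s-t_0)} + \cdots$, is dominated by $K_5|\sigma(t_0)|e^{(K_1-\lambda_1)(t-t_0)}$.

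The main obstacle is the bookkeeping of the coupled system: because no spectral gap is assumed, the two variables genuinely feed into each other and one cannot decouple them by a single Gronwall argument — hence the bootstrap. The delicate point is making sure the constants $K_2,\dots,K_5$ depend only on $K_1,\lambda_1,\lambda_{N+1}$ and not on $t-t_0$ or $N$; this requires being careful that every integral of the form $\int_0^\tau e^{-a(\tau-s)}e^{bs}\,ds$ is bounded by $C\max(e^{b\tau}, e^{-a\tau})$ with $C = C(a,b)$ independent of $\tau$, which holds precisely because $a = \lambda_{N+1}-(\text{something}) > 0$ and the difference $a+b$ stays bounded away from $0$. A secondary technical care point is the introduction of $\alpha$: it is there only to convert the possibly non-strict inequality $\lambda_{N+1}-K_1 \ge$ (rate) into a strict exponential decay with a clean constant, and any fixed $\alpha \in (0,\lambda_{N+1}-K_1)$ works.
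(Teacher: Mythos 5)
Your setup (variation of constants, spectral bounds, the coupled integral inequalities) matches the paper's starting point, but the bootstrap does not reach \eqref{3.1}, and \eqref{3.1} is the entire content of the lemma. The delicate claim is that the $|\sigma(t_0)|$-contribution \emph{decays} like $e^{-(\lambda_{N+1}-K_1-\alpha)(t-t_0)}$ even though nothing is assumed about $K_1$ versus $\lambda_1$ (only $K_1<\lambda_{N+1}$). In your second stage you insert the crude bound $|\rho(s)|+|\sigma(s)|\le C\big(|\rho(t_0)|+|\sigma(t_0)|\big)e^{(K_1-\lambda_1)(s-t_0)}$ into the $\sigma$-inequality; the non-transient part of the resulting integral is proportional to $\big(|\rho(t_0)|+|\sigma(t_0)|\big)e^{(K_1-\lambda_1)(t-t_0)}$, so the piece attached to $|\sigma(t_0)|$ still carries the (typically growing) factor $e^{(K_1-\lambda_1)(t-t_0)}$. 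No choice of a small parameter $\alpha$ converts this into the decaying factor of \eqref{3.1}: what your argument actually yields is $|\sigma(t)|\le C'\big(|\rho(t_0)|+|\sigma(t_0)|\big)e^{(K_1-\lambda_1)(t-t_0)}+|\sigma(t_0)|e^{-\lambda_{N+1}(t-t_0)}$, which is strictly weaker than \eqref{3.1} and useless for Lemma \ref{3.2}, where one takes $\rho(t_0)=0$ and needs $|\sigma(t)|$ exponentially small as the time span grows. Iterating the bootstrap cannot repair this: the equality system $\rho'=(K_1-\lambda_1)\rho+K_1\sigma$, $\sigma'=K_1\rho-(\lambda_{N+1}-K_1)\sigma$ with $\rho(t_0)=0$ has a $\sigma$-component containing a nonzero multiple of $|\sigma(t_0)|e^{(K_1-\lambda_1+\alpha)(t-t_0)}$, so Gronwall-type manipulations of the two scalar inequalities in the way you propose cannot produce a $|\sigma(t_0)|$-term with a decaying exponent when $K_1>\lambda_1$. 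A secondary slip: adding the two componentwise inequalities and applying Gronwall gives the rate $2K_1-\lambda_1$, not $K_1-\lambda_1$; the crude bound at rate $K_1-\lambda_1$ should instead come from estimating $|u-v|$ directly ($\tfrac12\tfrac{d}{dt}|u-v|^2+\lambda_1|u-v|^2\le K_1|u-v|^2$) and then using $|\rho|,|\sigma|\le|u-v|$.

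You have also misread the role of $\alpha$: in the paper it is not a free small parameter but the specific constant $\alpha=\tfrac12\big({-(\lambda_{N+1}-\lambda_1)+\sqrt{(\lambda_{N+1}-\lambda_1)^2+4K_1^2}}\big)$, determined by the coupling. The paper's route is genuinely different from yours: it substitutes \eqref{3.3} into \eqref{3.4}, applies Fubini, introduces the auxiliary functions $\phi(t)=\int_{t_0}^te^{(\lambda_1-K_1)s}|\sigma(s)|ds$ and $\psi(t)=\int_{t_0}^te^{(\lambda_{N+1}-\lambda_1)s}\phi(s)ds$, and reduces everything to the second-order differential inequality $\psi''-(\lambda_{N+1}-\lambda_1)\psi'-K_1^2\psi\le\cdots$, which it integrates twice using factors built from the characteristic roots $\alpha$ and $\beta$; the decay rate $\lambda_{N+1}-K_1-\alpha$ in \eqref{3.1} is dictated by this quadratic, not chosen. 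Finally, since your derivation of \eqref{3.2} feeds \eqref{3.1} back into the $\rho$-inequality (as does the paper, via \eqref{3.3}), the $K_5|\sigma(t_0)|$-term of \eqref{3.2} is likewise not established by your proposal; only the $(1+K_4(t-t_0))$ mechanism you describe is consistent with the paper.
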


\begin{proof}
We first note that
\begin{align*}
\frac{d|\rho|}{dt}+\lambda_1|\rho| &\le K_1|\rho| + K_1|\sigma|, \\
\frac{d|\sigma|}{dt}+\lambda_{N+1}|\sigma| &\le K_1|\rho| + K_1|\sigma|,
\end{align*}
and
\begin{align*}
\frac{d}{dt}\big( e^{(\lambda_1-K_1)t}|\rho| \big) &\le K_1e^{(\lambda_1 - K_1)t}|\sigma|, \\
\frac{d}{dt}\big( e^{(\lambda_{N+1}-K_1)t}|\sigma| \big) &\le K_1e^{(\lambda_{N+1} - K_1)t}|\rho|.
\end{align*}
Integrating these inequailities from $t_0$ to $t$, we find that
\begin{align}
 e^{(\lambda_1-K_1)t}|\rho(t)| &\le e^{(\lambda_1-K_1)t_0}|\rho(t_0)| + K_1\int_{t_0}^te^{(\lambda_1-K_1)s}|\sigma(s)|ds,~\mbox{and} \label{3.3} \\
 e^{(\lambda_{N+1}-K_1)t}|\sigma(t)| &\le e^{(\lambda_{N+1}-K_1)t_0}|\sigma(t_0)| + K_1\int_{t_0}^te^{(\lambda_{N+1}-K_1)s}|\rho(s)|ds. \label{3.4}
\end{align}
The last integral of \eqref{3.4} is bounded by
\begin{align*}
K_1&\int_{t_0}^t e^{(\lambda_{N+1}-\lambda_1)\tau}\big\{  e^{(\lambda_1-K_1)t_0}|\rho(t_0)| +K_1\int_{t_0}^{\tau}e^{(\lambda_1 - K_1)s}|\sigma(s)|ds\big\}d\tau \\
&= K_1e^{(\lambda_1 - K_1)t_0}|\rho(t_0)|\int_{t_0}^t e^{(\lambda_{N+1}-\lambda_1)\tau}d\tau + K_1^2\int_{t_0}^t e^{(\lambda_{N+1}-\lambda_1)\tau}\int_{t_0}^{\tau}e^{(\lambda_1-K_1)s}|\sigma(s)|dsd\tau \\
&:= I + II
\end{align*}
The term $I$ is bounded by
$$
\frac{K_1}{\lambda_{N+1}-\lambda_1}e^{(\lambda_1 - K_1)t_0}|\rho(t_0)|e^{(\lambda_{N+1} - \lambda_1)t} .
$$
If we apply the Fubini theorem, the term $II$ becomes
$$
\frac{K_1^2}{\lambda_{N+1}-K_1} \int_{t_0}^t\big(  e^{(\lambda_{N+1} - \lambda_1)t} -  e^{(\lambda_{N+1} - \lambda_1)s} \big)e^{(\lambda_1-K_1)s}|\sigma(s)|ds.
$$
Thus we have
\begin{align*}
 e^{(\lambda_{N+1}-K_1)t}|\sigma(t)| &\le e^{(\lambda_{N+1}- K_1)t_0}|\sigma(t_0)| + \frac{K_1}{\lambda_{N+1} - \lambda_1}e^{(\lambda_1 - K_1)t_0} |\rho(t_0)|e^{(\lambda_{N+1}-\lambda_1)t} \\
&\hspace{3mm}+\frac{K_1^2}{\lambda_{N+1} - \lambda_1}e^{(\lambda_{N+1} - \lambda_1)t}\int_{t_0}^te^{(\lambda_1 - K_1)s}|\sigma(s)|ds\\
&\hspace{3mm} - \frac{K_1^2}{\lambda_{N+1} - \lambda_1}\int_{t_0}^te^{(\lambda_{N+1}-K_1)s}|\sigma(s)|ds.
\end{align*}
Put
$$
\phi(t) = \int_{t_0}^t e^{(\lambda_1 - K_1)s}|\sigma(s)|ds.
$$
Then
 $$
\phi'(t) = e^{(\lambda_1 - K_1)t}|\sigma(t)|,~\mbox{and}
$$
\begin{align}
e^{(\lambda_{N+1} - \lambda_1)t}\phi'(t) &\le e^{(\lambda_{N+1}-K_1)t_0}|\sigma(t_0)| + \frac{K_1}{\lambda_{N+1} - \lambda_1}e^{(\lambda_1 - K_1)t_0}|\rho(t_0)|e^{(\lambda_{N+1}-\lambda_1)t} \label{*} \\
&\hspace{3mm}+\frac{K_1^2}{\lambda_{N+1} - \lambda_1}e^{(\lambda_{N+1} - \lambda_1)t}\phi(t) -\frac{K_1^2}{\lambda_{N+1} - \lambda_1}\int_{t_0}^t e^{(\lambda_{N+1} - \lambda_1)s}\phi'(s)ds. \notag
\end{align}
The last two terms of \eqref{*} are reudced to
$$
K_1^2 \int_{t_0}^t e^{(\lambda_{N+1}-\lambda_1)s}\phi(s)ds.
$$

We now introduce another auxiliary function
$$
\psi(t) = \int_{t_0}^t e^{(\lambda_{N+1}-\lambda_1)s}\phi(s)ds.
$$
Then we have
\begin{align*}
\psi'(t) &= e^{(\lambda_{N+1} - \lambda_1)t}\phi(t), \\
\psi '' (t) &= (\lambda_{N+1} - \lambda_1) \psi '(t) + e^{(\lambda_{N+1} - \lambda_1)t} \phi '(t)
\end{align*}
Consequently the inequality \eqref{*} is reduced to
\begin{align}
\psi ''(t) - (\lambda_{N+1} &- \lambda_1)\psi '(t) - K_1^2\psi(t) \label{3.6} \\
&\le e^{(\lambda_{N+1} - K_1)t_0}|\sigma(t_0)| +\frac{K_1^2}{\lambda_{N+1} - \lambda_1}e^{(\lambda_1 - K_1)t_0}|\rho(t_0)|e^{(\lambda_{N+1} - \lambda_1)t}. \notag
\end{align}
With the choice of
\begin{align*}
\alpha &= \frac{-(\lambda_{N+1} - \lambda_1) + \sqrt{(\lambda_{N+1}-\lambda_1)^2+4K_1^2}}{2}~\mbox{and} \\
\beta &= \frac{(\lambda_{N+1} - \lambda_1) + \sqrt{(\lambda_{N+1}-\lambda_1)^2+4K_1^2}}{2},
\end{align*}
the left hand side of \eqref{3.6} becomes
$$
\psi ''(t) - \alpha \psi ' (t)+ \beta(\psi '(t) - \alpha \psi(t)).
$$
Multiply an integrating factor $e^{\beta t}$ and integrate to obtain
\begin{align*}
\psi ' - \alpha \psi &\le \frac{1}{\beta} e^{(\lambda_{N+1} - K_1)t_0} |\sigma(t_0)| \\
&\hspace{3mm}+ \frac{K_1^2}{(\lambda_{N+1} - \lambda_1 ) (\lambda_{N+1} -\lambda_1 + \beta) }e^{(\lambda_1 - K_1)t_0}|\rho(t_0)|e^{(\lambda_{N+1} - \lambda_1)t}.
\end{align*}
Integrating again, we obtain
\begin{align*}
\psi &\le \frac{1}{\alpha \beta}e^{(\lambda_{N+1} - K_1)t_0}|\sigma(t_0)|e^{\alpha( t - t_0)}  \\
&\hspace{3mm}+ \frac{K_1^2}{(\lambda_{N+1} - \lambda_1)(\lambda_{N+1} - \lambda_1 + \beta)(\lambda_{N+1} - \lambda_1 - \alpha)}e^{(\lambda_1 - K_1)t_0}|\rho(t_0)|e^{(\lambda_{N+1} - \lambda_1)t}.
\end{align*}
From the inequality \eqref{3.4}, we get
\begin{align*}
|\sigma(t)| &\le e^{-(\lambda_{N+1}-K_1)(t -t_0)}|\sigma(t_0)| + \frac{K_1}{\lambda_{N+1} - \lambda_1} e^{(\lambda_1 - K_1)t_0}|\rho(t_0)|e^{(K_1 - \lambda_1)t} \\
&\hspace{3mm}+\frac{K_1^2}{\alpha \beta}e^{(\lambda_{N+1} - K_1  - \alpha)t_0}|\sigma(t_0)|e^{-(\lambda_{N+1} - K_1 - \alpha)t}   \\
&\hspace{3mm}+  \frac{K_1^2}{(\lambda_{N+1} - \lambda_1)(\lambda_{N+1} - \lambda_1 + \beta)(\lambda_{N+1} - \lambda_1 - \alpha)}e^{(\lambda_1 - K_1)t_0}|\rho(t_0)|e^{(K_1 - \lambda_1)t}.
\end{align*}
In short notation, we have
\begin{equation} \label{+}
|\sigma(t)| \le K_2|\sigma(t_0)|e^{-(\lambda_{N+1} - K_1 - \alpha)(t-t_0)} + K_3 |\rho(t_0)| e^{(K_1 - \lambda_1)(t - t_0)}
\end{equation}
for some positive constants $K_2$, $K_3$ depending on $K_1$, $\lambda_1$ and $\lambda_{N+1}$. Plugging \eqref{+} into the inequality \eqref{3.3}, we obtain
$$
|\rho(t)| \le |\rho(t_0)|(1+K_4(t-t_0))e^{(K_1 - \lambda_1)(t - t_0)} + K_5 |\sigma(t_0)|e^{(K_1 - \lambda_1)(t -t_0)}.
$$
\end{proof}
As an immediate consequence of Lemma \ref{lemma3.1}, we observe that if $\rho(t_0) = 0$, then $\sigma(t)$ decays to zero exponentially.

\begin{lemma} \label{3.2}
$\{ Q\mathcal M_n \}_{n \in \mathbb N}$ is a Cauchy sequence with the Hausdorff distance $d_H$.
\end{lemma}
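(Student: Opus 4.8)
The plan is to bound the Hausdorff distance $d_H(Q\mathcal M_n,Q\mathcal M_{n+k})$ by a quantity $\varepsilon_n$ that tends to $0$ as $n\to\infty$ and is independent of $k\in\mathbb N$; the Cauchy property follows at once. The one preliminary input is the uniform bound used already in \eqref{2.5}: if $(p,q)$ solves \eqref{1.1} on an interval $[a,b]$ with $q(a)=0$, then the variation of constants formula and \eqref{1.2} give $|q(t)|\le K_0/\lambda_{N+1}$ for all $t\in[a,b]$. In particular each $Q\mathcal M_m$ lies in the ball of radius $K_0/\lambda_{N+1}$ of $QH$, so these sets are bounded and $d_H$ is meaningful; the estimate \eqref{2.6} shows moreover that they are precompact in $H$, so one may pass to closures.

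First I would show that every point of $Q\mathcal M_{n+k}$ is within $\varepsilon_n$ of $Q\mathcal M_n$. A point of $Q\mathcal M_{n+k}$ is, after a time translation by $k$, of the form $\hat q(n)$ where $(\hat p,\hat q)$ solves \eqref{1.1} on $[-k,n]$ with $\hat q(-k)=0$. Let $(p,q)$ be the solution of \eqref{1.1} on $[0,n]$ with $p(0)=\hat p(0)$ and $q(0)=0$, so $q(n)\in Q\mathcal M_n$. Applying Lemma \ref{lemma3.1} to $u=p+q$ and $v=\hat p+\hat q$ on $[0,n]$ with $t_0=0$, we have $\rho(0)=p(0)-\hat p(0)=0$ and, by the bound above, $|\sigma(0)|=|\hat q(0)|\le K_0/\lambda_{N+1}$, so \eqref{3.1} yields
\[
|q(n)-\hat q(n)|=|\sigma(n)|\le K_3\,\frac{K_0}{\lambda_{N+1}}\,e^{-(\lambda_{N+1}-K_1-\alpha)n}=:\varepsilon_n ,
\]
which decays exponentially in $n$ (recall the observation made right after Lemma \ref{lemma3.1} that $\sigma$ decays exponentially when $\rho(t_0)=0$) and does not depend on $k$.

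Next I would establish the reverse: every point $q(n)\in Q\mathcal M_n$, with $(p,q)$ solving \eqref{1.1} on $[0,n]$ and $q(0)=0$, is within $\varepsilon_n$ of $Q\mathcal M_{n+k}$. By Lemma \ref{lemma2.1} (applied on the interval $[-k,0]$ in place of $[0,n]$, which is the boundary value problem \eqref{2.1}) there is a solution $(\hat p,\hat q)$ of \eqref{1.1} on $[-k,0]$ with $\hat q(-k)=0$ and $\hat p(0)=p(0)$; continuing it forward by the flow to $[-k,n]$ and translating time, its value at $n$ gives a point $\hat q(n)\in Q\mathcal M_{n+k}$. Again $\rho(0)=p(0)-\hat p(0)=0$ and $|\sigma(0)|=|\hat q(0)|\le K_0/\lambda_{N+1}$, so Lemma \ref{lemma3.1} gives $|q(n)-\hat q(n)|\le\varepsilon_n$ exactly as above. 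Combining the two inclusions, $d_H(Q\mathcal M_n,Q\mathcal M_m)\le\varepsilon_{\min(m,n)}$ for all $m,n\in\mathbb N$, and since $\varepsilon_n\to0$ the sequence $\{Q\mathcal M_n\}$ is Cauchy.

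The only delicate point is the choice of which components to synchronize: one must match the $P$-components at the base time $t_0=0$ rather than at the terminal time $n$, so that $\rho(t_0)=0$ and \eqref{3.1} contributes only the decaying term $K_3|\sigma(t_0)|e^{-(\lambda_{N+1}-K_1-\alpha)(t-t_0)}$ and not the possibly expanding term $K_2|\rho(t_0)|e^{(K_1-\lambda_1)(t-t_0)}$ (note that $K_1-\lambda_1$ need not be negative). For the reverse inclusion this in turn forces us to prepend to $(p,q)$ a solution piece on $[-k,0]$ that vanishes in $QH$ at the left endpoint while attaining the prescribed value $p(0)$ in $PH$ at the right endpoint, which is precisely the backward boundary value problem solved by degree theory in Lemma \ref{lemma2.1}; thus that lemma, and not merely forward solvability of \eqref{1.1}, is what is used here.
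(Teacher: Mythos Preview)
Your proof is correct and follows essentially the same route as the paper: match the $P$-components at the earlier time so that $\rho(t_0)=0$ and apply Lemma~\ref{lemma3.1} to get exponential decay of $|\sigma|$, using forward solvability for one inclusion and the backward boundary value problem of Lemma~\ref{lemma2.1} for the other. The only cosmetic differences are your parametrization by $(n,n+k)$ instead of the paper's $(m,n)$ and the constant label ($K_3$ versus the paper's $K_2$, which is an internal inconsistency of the paper between \eqref{3.1} and \eqref{+}).
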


\begin{proof}
For any $m,n \in \mathbb N$ with $m \le n$, let $q_m \in Q \mathcal M_m$ and $q_n \in Q \mathcal M_n$. Then by definition, there exist solutions $u_m(t)$ and $u_n(t)$ of \eqref{1.1} such that
$$
Qu_m(0) = Qu_n(0)= 0,~Qu_m(m) = q_m,~\mbox{and}~Qu_n(n) = q_n.
$$
Let $v(t)$ be the solution of \eqref{1.1} for $ t \in [n-m, n]$ with $v(n-m) = Pu_n(m)$. Then from Lemma \ref{lemma3.1}, we find
\begin{align*}
|\sigma(n)| = | Qu_n(n) - Qv(n) | = |q_n - Qv(n)| \le K_2 |\sigma(m)|e^{-(\lambda_{N+1} - K_1 - \alpha)m},
\end{align*}
and thus
$$
\mbox{dist}(q_n, Q\mathcal M_m) \le \frac{K_0 K_2}{\lambda_{N+1}}e^{-(\lambda_{N+1}-K_1 - \alpha)m}.
$$

On the other hand, from Lemma \ref{lemma2.1}, there exists a solution $u(t)$ of \eqref{1.1} for $t \in [m-n, m]$ such that
$$
Qu(m-n) = 0~\mbox{and}~Pu(0) = Pu_m(0).
$$
By Lemma \ref{lemma3.1}, we get
$$
|\sigma(m)| = |Qu(m) - Qu_m(m)| = |Qu(m) - q_m| \le K_2|\sigma(0)|e^{-(\lambda_{N+1}-K_1 - \alpha)m}.
$$
Hence
$$
\mbox{dist}(Q\mathcal M_n, q_m) \le \frac{K_0K_2}{\lambda_{N+1}}e^{-(\lambda_{N+1} - K_1 - \alpha)m}.
$$
Consequently we obtain
$$
d_H(Q\mathcal  M_n, Q\mathcal M_m) \le \frac{K_0K_2}{\lambda_{N+1}}e^{-(\lambda_{N+1} - K_1 - \alpha)m},
$$
which completes the proof.
\end{proof}
We recall that the space of closed and bounded subsets of the Banach space $QH$ with the Hausdorff distance is complete. Hence the sequence $\{ QM_n \}$ converges to a closed bounded set, say $D_{\infty}$. For any $q_{\infty} \in D_{\infty}$, choose a sequence $\{q_n \in QM_n \}$ satisfying $q_n \to q_\infty$.
Then for each $n \in \mathbb N$, there exists $p_n \in PH$  such that $(p_n, q_n) \in \mathcal M_n$.  Moreover we observe that the dynamics of \eqref{1.1} are trivial outside of the absorbing ball by the truncation, and thus we may assume
$$
p_n \in B_R := \{ p \in PH : |p| \le R \}.
$$
In fact, for any $(p, q ) \in M_n$,
 $$q=0 ~\mbox{if}~ |p|>R.  \label{3.8}$$

Since $B_R$ is compact in $PH$, $\{ p_n \}_{n =1}^\infty$ has a convergent subsequence, say $p_{n_k} \to p_\infty$. We finally define
\begin{align*}
M_\infty = \{& (p_\infty, q_\infty) : \mbox{there~exists~a~sequence}~\{ (p_{n_k}, q_{n_k}) \}~\mbox{in}~\mathcal M_{n_k} \\
&\mbox{such~that}~(p_{n_k}, q_{n_k}) \to (p_\infty, q_\infty)~\mbox{as~}k \to \infty \}.
\end{align*}

\section{Main Results}

We recall that $\mathcal M_{-\infty}$ is constructed as the collection of backward bounded solutions whereas $M_\infty$ is a forward accumulation of solutions. Surprisingly, we first have;

\begin{theorem} \label{theorem4.1}
$\mathcal M_{\infty} \subset \mathcal M_{-\infty}$.
\end{theorem}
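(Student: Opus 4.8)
The plan is to unwind the definition of $\mathcal{M}_\infty$ and then reduce everything to the compactness argument already carried out for Lemma \ref{lemma2.2}, by means of a time translation. First I would fix $(p_\infty, q_\infty) \in \mathcal{M}_\infty$: by definition there are a subsequence $n_k \to \infty$ and points $(p_{n_k}, q_{n_k}) \in \mathcal{M}_{n_k}$ with $(p_{n_k}, q_{n_k}) \to (p_\infty, q_\infty)$, and unwinding the definition of $\mathcal{M}_{n_k}$, for each $k$ there is a solution $u_k(t) = P u_k(t) + Q u_k(t)$ of \eqref{1.1} on $[0, n_k]$ with $Q u_k(0) = 0$ and $u_k(n_k) = p_{n_k} + q_{n_k}$.

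Next I would translate time: set $v_k(t) := u_k(t + n_k)$ for $t \in [-n_k, 0]$. Since \eqref{1.1} is autonomous, $v_k$ is again a solution of \eqref{1.1}, equivalently of \eqref{2.1}, on $[-n_k, 0]$, and it satisfies $Q v_k(-n_k) = Q u_k(0) = 0$ and $v_k(0) = u_k(n_k) = p_{n_k} + q_{n_k}$. Thus $\{v_k\}$ is precisely a sequence of the type handled in Lemma \ref{lemma2.2}, except that the prescribed low-mode value at $t=0$, namely $P v_k(0) = p_{n_k}$, is not a fixed $p_0$ but varies with $k$. This causes no difficulty: a convergent sequence is bounded, so $M := \sup_k |p_{n_k}| < \infty$, and the a priori bounds \eqref{2.4}--\eqref{2.6} and the equicontinuity estimates in the proof of Lemma \ref{lemma2.2} involve $|p_0|$ only through an upper bound; replacing $|p_0|$ by $M$ throughout, the same Arzela-Ascoli and diagonal argument produces a further subsequence (not relabeled) with $P v_k(t) \to p(t)$ and $Q v_k(t) \to q(t)$ uniformly on every compact subset of $(-\infty, 0]$, where $u(t) = p(t) + q(t)$ solves \eqref{1.1} on $(-\infty, 0]$.

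Finally I would identify the limit. The bound \eqref{2.5} gives $|Q v_k(t)| \le K_0/\lambda_{N+1}$ uniformly in $k$ and $t$, hence $|q(t)| \le K_0/\lambda_{N+1}$ for all $t \le 0$, so $u$ is a backward bounded solution. Evaluating the uniform convergence at $t=0$ yields $p(0) = \lim_k P v_k(0) = \lim_k p_{n_k} = p_\infty$ and $q(0) = \lim_k Q v_k(0) = \lim_k q_{n_k} = q_\infty$. Therefore $(p_\infty, q_\infty) = (p(0), q(0)) \in \mathcal{M}_{-\infty}$, which is the claim.

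I expect the only genuine point to check is the mild extension of Lemma \ref{lemma2.2} to a varying but bounded terminal low mode $p_{n_k}$; the rest is the autonomy of \eqref{1.1} together with routine bookkeeping of nested subsequences.
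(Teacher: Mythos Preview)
Your proof is correct and follows essentially the same route as the paper: unwind the definition of $\mathcal{M}_\infty$, time-translate the solutions so that the zero high-mode condition sits at $t=-n_k$, and then invoke the Arzela--Ascoli argument of Lemma~\ref{lemma2.2}. In fact you are more careful than the paper on one point: Lemma~\ref{lemma2.2} is stated for a \emph{fixed} terminal low mode $p_n(0)=p_0$, and both you and the paper need it for a varying but bounded sequence $p_{n_k}$; you explicitly note that the estimates \eqref{2.4}--\eqref{2.6} depend on $|p_0|$ only through an upper bound, which is the correct observation.
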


\begin{proof}

For any $(p_\infty, q_\infty) \in \mathcal M_\infty$, choose $(p_{n_k}, q_{n_k}) \in \mathcal M_{n_k}$ such that $p_{n_k} \to p_\infty$ and $q_{n_k} \to q_\infty$. For each $k \in \mathbb N$, there exists a solution $u_{n_k}(t) = p_{n_k}(t) + q_{n_k}(t)$ of \eqref{2.1} satisfying
$$
p_{n_k}(0) = p_{n_k},~q_{n_k}(0) = q_{n_k}~\mbox{and}~q_{n_k}(-n_k) = 0.
$$
By Lemma \ref{lemma2.2}, $\{ u_{n_k}(t) \}$ has a subsequence which converges to a backward bounded solution $u(t)$ with $Pu(0) = p_\infty$ and $Qu(0) = q_\infty$, which completes the proof.
\end{proof}

Now we are in a position to state a characterization of backward bounded solutions.

\begin{theorem} \label{theorem4.2}
The collection of backward bounded solutions of \eqref{1.1} is the graph of an upper hemicontinuous set-valued function from $PH$ to $QH$, which is invariant and contains the global attractor.
\end{theorem}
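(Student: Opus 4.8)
The plan is to check, in order, that $\mathcal M_{-\infty}$ is the graph of a set-valued map defined on all of $PH$, that this map is upper hemicontinuous, and finally that $\mathcal M_{-\infty}$ is invariant and contains the global attractor (the last two being the routine facts already noted after the definition of $\mathcal M_{-\infty}$). Put $\Phi(p_0) = \{\, q_0 \in QH : (p_0,q_0) \in \mathcal M_{-\infty} \,\}$; then $\mathcal M_{-\infty}$ is the graph of $\Phi$ by construction, and Lemmas \ref{lemma2.1} and \ref{lemma2.2} together assert precisely that $\Phi(p_0) \neq \emptyset$ for every $p_0 \in PH$, so $\Phi$ is defined on all of $PH$.

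For the main point I would first localise the range of $\Phi$. Letting $t_0 \to -\infty$ in the variation of constants formula and using $|e^{-QA(t-t_0)}q(t_0)| \le e^{-\lambda_{N+1}(t-t_0)}\sup_{s \le 0}|q(s)| \to 0$, every backward bounded solution satisfies $q(t) = \int_{-\infty}^t e^{-QA(t-s)}QF(u(s))\,ds$ for $t \le 0$, so the estimates \eqref{2.5} and \eqref{2.6} apply and give $|q(0)| \le K_0/\lambda_{N+1}$ and $|A^{1/2}q(0)| \le \sqrt 2\,K_0/\sqrt{\lambda_{N+1}}$. Hence $\Phi(PH)$ is contained in the fixed set $\mathcal K = \{\, q \in QH : |q| \le K_0/\lambda_{N+1},\ |A^{1/2}q| \le \sqrt 2\,K_0/\sqrt{\lambda_{N+1}} \,\}$, which is compact in $H$ since $D(A^{1/2})$ is compactly embedded in $H$. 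For a correspondence whose values all lie in a fixed compact set, a closed graph forces upper hemicontinuity (with closed values), so it is enough to prove that $\mathcal M_{-\infty}$ is closed in $H$.

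To get closedness, take $(p_0^{(j)},q_0^{(j)}) \in \mathcal M_{-\infty}$ with $(p_0^{(j)},q_0^{(j)}) \to (p_0,q_0)$, and let $u_j = p_j + q_j$ on $(-\infty,0]$ be corresponding backward bounded solutions; I would rerun the Arzela-Ascoli and diagonal argument from the proof of Lemma \ref{lemma2.2}. The one new ingredient is a uniform-in-$j$ bound on the low modes over each backward interval $[-T,0]$: from $p_j(t) = e^{-At}p_j(0) - \int_t^0 e^{-A(t-s)}PF(u_j(s))\,ds$ and $\| e^{-At} \|_{op} \le e^{\lambda_N|t|}$ on $PH$ one obtains $|p_j(t)| \le e^{\lambda_N T}\big(\sup_j|p_0^{(j)}| + K_0/\lambda_N\big)$ for $t \in [-T,0]$, which is finite because $\{p_0^{(j)}\}$ converges. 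With this bound and the uniform $D(A^{1/2})$-bound on $q_j$, the equicontinuity estimates of Lemma \ref{lemma2.2} hold on $[-T,0]$ uniformly in $j$, so a diagonal subsequence converges uniformly on compact subsets of $(-\infty,0]$ to a solution $u$ of \eqref{1.1} with $Qu$ bounded on $(-\infty,0]$ and $u(0) = p_0 + q_0$; thus $(p_0,q_0) \in \mathcal M_{-\infty}$. This closes the graph and, together with the compactness of $\mathcal K$, yields that $\Phi$ is upper hemicontinuous with nonempty compact values.

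It remains to record invariance and containment of the attractor. If $(p_0,q_0) \in \mathcal M_{-\infty}$ with backward bounded solution $u$ on $(-\infty,0]$, extend $u$ forward by the semigroup $S(\cdot)$; for any $t \ge 0$ the $Q$-component of $u$ stays bounded on $(-\infty,t]$ (it is $\le K_0/\lambda_{N+1}$ on $(-\infty,0]$ and $\le 2K_0/\lambda_{N+1}$ on $[0,t]$ by the same estimate run in forward time), so $u(t) \in \mathcal M_{-\infty}$, and also $(p_0,q_0) = u(0) = S(t)u(-t)$ with $u(-t) \in \mathcal M_{-\infty}$; hence $S(t)\mathcal M_{-\infty} = \mathcal M_{-\infty}$ for all $t \ge 0$. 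Finally, every point of the global attractor lies on a complete bounded orbit, whose $Q$-component is in particular bounded on $(-\infty,0]$, so that point belongs to $\mathcal M_{-\infty}$. The step I expect to require the most care is the closed-graph argument: because there is no global backward bound on the low modes, one must check that the a priori bounds feeding the Arzela-Ascoli step are uniform in $j$ on each finite backward interval, after which the remaining topology is standard.
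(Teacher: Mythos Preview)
Your proof is correct and follows the same high-level architecture as the paper: define $\Phi$, show its values lie in a fixed compact subset of $QH$ via the $D(A^{1/2})$-bound \eqref{2.6}, prove the graph is closed, and then invoke the closed-graph theorem for set-valued maps to conclude upper hemicontinuity. Where you differ is in the closedness step. The paper establishes closedness by going through the forward construction: given a limit point, it pulls back to approximants in the finite-time manifolds $\mathcal M_{m_{k,n}}$ and runs a diagonal argument there, so the argument leans on the machinery of Section~3 and Theorem~\ref{theorem4.1}. You instead work intrinsically in $\mathcal M_{-\infty}$: you take a convergent sequence of backward bounded solutions and rerun the Arzel\`a--Ascoli/diagonal argument of Lemma~\ref{lemma2.2} directly, supplying the one missing ingredient (a uniform-in-$j$ bound on the low modes over each finite backward interval, coming from the convergence of $p_0^{(j)}$). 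Your route is more self-contained and avoids any appeal to $\mathcal M_\infty$; the paper's route ties the result to the graph-transform picture developed earlier. You also make explicit two points the paper only asserts in passing: that $\Phi(p_0)\neq\emptyset$ for every $p_0$ (via Lemmas~\ref{lemma2.1}--\ref{lemma2.2}) and the two-sided invariance $S(t)\mathcal M_{-\infty}=\mathcal M_{-\infty}$.
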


\begin{proof}
Consider a set-valued function $\Phi:PH \to QH$ defined by
\begin{align*}
\Phi(p_0) =\{&q_0 \in QH: (p(t),q(t))~ \mbox{is~ a~ backward ~bounded~solution~ of}~ \eqref{1.1}  \\
&~\mbox{such~that}~ p(0) = p_0, ~q(0) = q_0, ~\mbox{and}~q(t)~\mbox{is~bounded~on}~(-\infty,0] \}. \notag
\end{align*}
Then we see that the collection $\mathcal M_{-\infty}$ of backward bounded solutions of \eqref{1.1}  is the graph of $\Phi$, i.e.,
$$
\mathcal M_{-\infty} = \{ (p_0, q_0) : p_0 \in PH, q_0 \in \Phi(p_0)\}.
$$
It is clear that $\mathcal M_{-\infty}$ is invariant and contains the global attractor.

Furthermore, from (3.8), $\Phi$ has a compact support in $B_R$. The estimate (2.6) holds for any backward bounded solution and since the space $D(A^{1/2})$
is compactly imbedded in $H$,  the image of $\Phi$ is contained in a compact subset of $QH$.

We now show that the graph of $\Phi$ is closed in $H$.
For any $(p_\infty, q_\infty) \in {\overline{\mathcal M}}_\infty$, we choose a sequence $\{ (p_{\infty,n}, q_{\infty, n}) \}$ in $\mathcal M_\infty$ such that
$$
(p_{\infty,n}, q_{\infty, n})  \to  (p_{\infty}, q_{\infty})~\mbox{as}~n \to \infty.
$$
By the definition of $\mathcal M_\infty$, there is a sequence $\{ (p_{m_{k,n}}, q_{m_{k,n}}) \}$ in $\mathcal M_{m_{k,n}}$ such that
$$
(p_{m_{k,n}}, q_{m_{k,n}}) \to  (p_{\infty,n}, q_{\infty, n})~\mbox{as}~k \to \infty.
$$
We note here that $m_{k,l} \le m_{k,{l'}}$ if $l \le l'$, and for each $l$, $m_{k,l} \to \infty$ as $k \to \infty$.
By the diagonal argument, we can derive that
$$
(p_{m_{n,n}}, q_{m_{n,n}}) \to  (p_{\infty}, q_{\infty})~\mbox{as}~n \to \infty,
$$
and so $(p_\infty, q_\infty) \in \mathcal M_\infty$.

Similarly we can show that for any $p_0 \in PH$, $\Phi(p_0)$ is closed in $QH$. If we apply the closed graph theorem (e.g., Prop. 1.4.8 in \cite{AF}), we may conclude that $\Phi$ is upper hemicontinuous. This completes the proof.
\end{proof}

We finally observe that
\begin{theorem} \label{theorem4.3}
If $\Phi$ is a single valued function, then it holds
  $$\mathcal M_{\infty} = \mathcal M_{-\infty}. $$
\end{theorem}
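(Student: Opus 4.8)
The plan is to use Theorem \ref{theorem4.1}, which already gives $\mathcal M_{\infty}\subset\mathcal M_{-\infty}$, and to prove the reverse inclusion $\mathcal M_{-\infty}\subset\mathcal M_{\infty}$ under the hypothesis that $\Phi$ is single valued. The organizing observation is that when $\Phi$ is single valued, Theorem \ref{theorem4.2} makes $\mathcal M_{-\infty}$ the honest graph of a function $PH\to QH$, so the low-mode projection $(p,q)\mapsto p$ is a bijection of $\mathcal M_{-\infty}$ onto $PH$; in particular $\mathcal M_{-\infty}$ contains exactly one point with a prescribed first coordinate. Since $\mathcal M_{\infty}\subset\mathcal M_{-\infty}$, it therefore suffices to prove $P\mathcal M_{\infty}=PH$: if for every $p_0\in PH$ there is some $q_0$ with $(p_0,q_0)\in\mathcal M_{\infty}$, then $(p_0,q_0)$ must coincide with the unique element of $\mathcal M_{-\infty}$ lying over $p_0$, whence $\mathcal M_{-\infty}\subset\mathcal M_{\infty}$ and equality follows.

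To establish $P\mathcal M_{\infty}=PH$, fix $p_0\in PH$. For each $n\in\mathbb N$, Lemma \ref{lemma2.1} yields a solution $v_n$ of \eqref{2.1} on $[-n,0]$ with $Pv_n(0)=p_0$ and $Qv_n(-n)=0$; translating time by $n$ exhibits $(p_0,Qv_n(0))$ as a point of $\mathcal M_n$. Because $Qv_n(-n)=0$, the estimates \eqref{2.5} and \eqref{2.6} apply verbatim and give, uniformly in $n$, the bound $|Qv_n(0)|\le K_0/\lambda_{N+1}$ together with a uniform bound on $|A^{1/2}Qv_n(0)|$. Since $D(A^{1/2})$ is compactly imbedded in $H$, some subsequence satisfies $Qv_{n_k}(0)\to q_0$ in $QH$; then $(p_0,Qv_{n_k}(0))\in\mathcal M_{n_k}$ and $(p_0,Qv_{n_k}(0))\to(p_0,q_0)$, so $(p_0,q_0)\in\mathcal M_{\infty}$ by definition, and $p_0\in P\mathcal M_{\infty}$.

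Two small points fit in here. First, $\Phi(p_0)$ is never empty, by Lemmas \ref{lemma2.1}--\ref{lemma2.2}, so ``single valued'' really means that every fibre is a singleton, which is exactly what the uniqueness step above uses. Second, if $|p_0|>R$ the truncation property $q=0$ when $|p|>R$ forces $Qv_n(0)=0$, hence $q_0=0$, consistently with $\Phi(p_0)=\{0\}$ and with $(p_0,0)$ lying in every $\mathcal M_n$ and in $\mathcal M_{-\infty}$.

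I do not expect a genuine obstacle here; this statement is essentially a packaging of the machinery of Sections~2--3. The one tempting wrong turn is to try to force the low-mode parts of the approximants to converge by the flow, which is thwarted by the possibly expanding factor $e^{(K_1-\lambda_1)(t-t_0)}$ in \eqref{3.2}. The role of Lemma \ref{lemma2.1} is precisely to bypass this: it pins the low mode to $p_0$ exactly, leaving only the high modes---controlled by the decaying bounds \eqref{2.5}--\eqref{2.6}---to be treated by compactness. What remains is bookkeeping: the time translation placing $(p_0,Qv_n(0))$ in $\mathcal M_n$, and checking that a constant low-mode sequence meets the definition of $\mathcal M_{\infty}$.
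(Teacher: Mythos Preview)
Your proof is correct and follows essentially the same approach as the paper: both arguments fix $p_0$, invoke Lemma~\ref{lemma2.1} to produce solutions on $[-n,0]$ with $Pv_n(0)=p_0$ and $Qv_n(-n)=0$ (so that $(p_0,Qv_n(0))\in\mathcal M_n$), extract a convergent subsequence of the high modes by compactness, and then use single-valuedness of $\Phi$ to identify the limit with the unique fibre element over $p_0$. The only cosmetic differences are that the paper cites Lemma~\ref{lemma2.2} for the compactness/convergence step where you appeal directly to the bound~\eqref{2.6} and the compact embedding, and that you package the logic as ``$P\mathcal M_\infty=PH$ plus $\mathcal M_\infty\subset\mathcal M_{-\infty}$'' while the paper phrases it as ``the limit backward bounded solution must have $q$-component $q_\infty$ by single-valuedness''.
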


\begin{proof}
Let $(p_{\infty}, q_{\infty}) \in \mathcal{M}_{-\infty}$. As argued in Lemma 2.2, we may take a sequence
$\{ u_{n_k} (t)=p_{n_k} (t) + q_{n_k} (t) \}$ of solutions of (2.1) for $t \in [-n_k, 0]$ such that 
$$p_{n_k} (0)= p_{\infty} ~ \mbox{and} ~~ q_{n_k} (-n_k )=0,$$
and $u_{n_k} (t)$ converges uniformly to a backward bounded solutuon as $k \to \infty$.
Now we have 
$$(p_{n_k} (0), q_{n_k} (0) ) \in \mathcal M_{n_k}, ~\mbox{and}~ (p_{n_k} (0), q_{n_k} (0) ) \to (p_{\infty}, q_{\infty}')  ~\mbox{as} ~ k \to \infty.$$
 From the assumption, we may conclude that $q_{\infty} = q_{\infty}'$ and  so $(p_{\infty}, q_{\infty}) \in \mathcal{M}_{\infty}$.

\end{proof}

\end{document}